\documentclass[11pt]{article}
\usepackage{amsmath}
\usepackage{dsfont}
\usepackage{mathrsfs}
\usepackage{amsmath,amssymb}
\usepackage{amsfonts}
\usepackage{hyperref}
\usepackage{amsthm}
\usepackage{graphicx}
\usepackage{subfigure}
\usepackage{xcolor}

\hfuzz=\maxdimen
\tolerance=10000
\hbadness=10000

\newtheorem{lemma}{Lemma}[section]
\newtheorem{definition}[lemma]{Definition}

\newtheorem{theorem}[lemma]{Theorem}
\newtheorem{corollary}[lemma]{Corollary}

\numberwithin{equation}{section}

\DeclareFixedFont{\Acknowledgment}{OT1}{cmr}{bx}{n}{14pt}
\textwidth 150mm \textheight 200mm \hoffset -1.2cm \voffset -0.5cm
\linespread{1.1}

\begin{document}

\title{\bf Kazdan-Warner equation on infinite graphs}
\author{Huabin Ge and Wenfeng Jiang}
\maketitle

\begin{abstract}
We concern in this paper the graph Kazdan-Warner equation
\begin{equation*}
\Delta f=g-he^f
\end{equation*}
on an infinite graph, the prototype of which comes from the smooth Kazdan-Warner equation on an open manifold. Different from the variational methods often used in the finite graph case, we use a heat flow method to study the graph Kazdan-Warner equation. We prove the existence of a solution to the graph Kazdan-Warner equation under the assumption that $h\leq0$ and some other integrability conditions or constrictions about the underlying infinite graphs.
\end{abstract}

\section{Introduction}
The smooth Kazdan-Warner equation gives a description of the conformal deformation of a smooth metric $g$ on a $2$-dimensional closed Riemannian manifold $(M, g)$. Let $\widetilde{g}=e^{2f}g$ be a conformal deformation of the smooth metric $g$. To find a smooth function $\widetilde{K}$ as the Gaussian curvature of $\widetilde{g}$, one needs to solve the nonlinear elliptic equation
$$\Delta_gf=K-\widetilde{K}e^{2f}.$$
By a parameter transformation of $f$ to $u$, the above equation takes the following form
\begin{equation}
\label{equ-intro-c}
\Delta_gu=c-he^{u},
\end{equation}
where $c$ is a constant, and $h$ is some prescribed function, with neither $c$ nor $h$ depends on the geometry of $(M, g)$. In the fundamental and pioneering work of Kazdan and Warner \cite{KZ-1}, they discussed the equation (\ref{equ-intro-c}) and gave almost completely characterizations to the solvability of (\ref{equ-intro-c}). In case $c<0$, they proved that there is a threshold $c_h<0$ such that the above equation has a solution if $c\in(c_h, 0)$, while it has no solution if $c<c_h$. However, the existence of a solution also in the critical case where $c=c_h$ was proved by Chen and Li \cite{CL}. Kazdan and Warner \cite{KZ-2} also discussed the solvability of (\ref{equ-intro-c}) on certain non-compact two dimensional manifolds.

The literature on the Kazdan-Warner equation is huge. However, in a very recent paper \cite{GLY}, Grigor'yan, Lin and Yang have studied this equation on a finite graph. In particular, in the regime where $c<0$, their result is essentially the same as in the manifold case of Kazdan and Warner. And also there is a solution in the critical case where $c=c_h$, which was proved by the first author of this paper Ge \cite{G}. Following Grigor'yan, Lin and Yang's idea, Ge \cite{G2} studied the $p$-th Kazdan-Warner equation $\Delta_pu=c-he^u$ on a finite graph.

Inspired by these works, we concern in this paper the Kazdan-Warner equation
\begin{equation}
\label{equ-intro-kw-graph}
\Delta f=g-he^f
\end{equation}
on an infinite graph, the prototype of which comes from the smooth Kazdan-Warner equation on an open manifold. The problem in the finite graph case
is somewhat simplified by the fact that function spaces are finite dimensional. Different from the variational methods often used in the finite graph case, we use a heat flow method (see Wang-Zhang \cite{WZ}) to study the Kazdan-Warner equation. We prove the existence of a solution to the Kazdan-Warner equation (\ref{equ-intro-kw-graph}) on an infinite graph under the assumption that $h\leq0$ and some other integrability conditions or constrictions about the underlying infinite graphs.

The paper is organized as follows. In section 2, we state the basic settings and main theorems. In section 3, we prove our main theorem, i.e. Theorem \ref{thm-main} in three steps.

\section{Settings and main results}
Let $G= (V, E)$ be a  graph, where $V$ denotes the vertex set and $E$ denotes the edge set. The graph $G$ is called connected, if one can not find a subset $V_1\subset V$, such that there are no edges between $V_1$ and $V_1^c$. A vertex measure $\mu$ is a map $\mu:V \to (0, \infty)$. An edge measure $w$ is a symmetric map $w: E \to (0, \infty) $ on G, where symmetric means, $w_{xy} = w_{yx}$ for each edge $x\thicksim y$. We call $G$ locally finite, if at each vertex $x$,  the set $\{y|y \thicksim x\}$ is finite. Throughout this paper, we denote $C(G,f,\ldots)$ as some positive constant depending only on the information of $G$, $f,\ldots$. Note that the information of $G$ contains $V,E,\mu$ and $w$. In the following, we always assume that $G$ is connected, infinite and locally finite.

For a function $f:V\to \mathbb{R}$, the graph Laplacian (with respect to $\mu,w$) is defined by
$$\Delta f(x)=\frac{1}{\mu(x)}\sum_{y\thicksim x}w_{xy}\big(f(y)-f(x)\big).$$
The integral of $f$ over $V$ (with respect to the vertex measure $\mu$) is defined by
$$\int_{V}fd\mu=\sum_{x\in V}f(x)\mu(x).$$
Note $f$ may not be integrable generally. Denote $L^p(V)$ as the space of all $p$-th integrable functions on $V$ (with respect to the vertex measure $\mu$). The absolute value of the gradient of $f$ is defined by
$$|\nabla f|(x)=\left (\frac{1}{2\mu(x)}\sum_{y\thicksim x}w_{xy}\big(f(x)-f(y)\big)^2 \right)^{1/2}.$$
If $|\nabla f|\in L^2(V)$, then a direct calculation gives the following Green formula
\begin{equation*}
\label{general-1}
\int_V |\nabla f|^2d\mu=-\int_V f \Delta fd\mu.
\end{equation*}

Now we consider the Kazdan-Warner equation on the graph $G$
\begin{equation}
\label{kz-def-t-1}
\Delta f+he^f-g=0,
\end{equation}
where $g$ and $h$ are known functions on $V$. We say $f: V\to \mathbb{R}$ is a global solution of (\ref{kz-def-t-1}), if $\Delta f(x)+h(x)e^{f(x)}-g(x)=0$ at each vertex $x\in V$. 

\begin{definition}
(full subgraph)
A full subgraph $G'\subset G$ is a subgraph of $G$, such that (1) for a pair of vertices $x$, $y$ in $G'$, $x\thicksim y$ in $G'$ if and only if $x\thicksim y$ in $G$, and (2) the vertex measure and edge measure in $G'$ are exactly the restriction of $\mu$ and $w$ for $G$.
\end{definition}


\begin{definition}
(exhaustion of a graph)
An exhaustion $\{G_k\}$ of the graph $G$ is a sequence of finite full subgraphs $G_1, G_2,\cdots$, with vertex sets $V_1\subset V_2\subset\cdots$ so that $V=\bigcup\limits_{k}V_k$.
\end{definition}

For each $G_k$ in an exhaustion of $G$, we set $\overline{V_k}=V_k \cup \{x|x\in G, x\thicksim y \text{ for some } y \in G_k \},$
and denote by $\overline{G_k}$ as its corresponding full subgraph. Moreover, we set $\partial  V_k=\overline{V_k}\setminus V_k$.

\begin{definition}
(Cheeger graph)
We say a graph $G$ with a vertex measure $\mu$ and an edge measure $w$ is a Cheeger graph, if there exists an exhaustion $\{G_k\}$ of $G$, such that for each $k$ and for each function $f:\overline{V_k}\to \mathbb{R}$ with $f|_{\partial {V_k}}\equiv 0$, the following Cheeger inequality holds
\begin{equation}
\label{cheeger-inequality}
\|\nabla f\|_{L^{2}(\overline{V_k})}>C(G)\|f\|_{L^{2}(\overline{V_k})},
\end{equation}
where $C(G)$ is a constant depending only on the information of $G$ and measure $\mu,w$.
\end{definition}

The inequality (\ref{cheeger-inequality}) is the graph theory version of Cheeger's inequality \cite{C} of eigenvalues of graph Laplacians. As is known, the curvature-dimension type inequality $CD(m,K)$ often implies a lower bound of the nonzero eigenvalues of graph Laplacians, see \cite{BCLL} for example. In addition, the curvature-dimension type inequality, the eigenvalue estimations, the isoperimetric constants, the Sobolev constants and the Cheeger-type inequalities are closely related to each other. They all reveal some geometric information of a manifold (or a graph). By now, there is a vital interest in the study of these objects on finite or infinite graphs. See\cite{APP}\cite{BSS}\cite{BHJ}\cite{BKWR}\cite{CGY}\cite{D}\cite{KMD}\cite{PRT} for more related works.

Generally, the Kazdan-Warner equation may not have a global solution if no conditions are imposed on $g$, $h$ and $G$. We pose two sufficient conditions in the paper, each of which guarantees the existence of a global solution of the Kazdan-Warner equation (\ref{kz-def-t-1}).\\[-6pt]


\noindent \emph{(C-1)}. $h\in L^1(V)$, $g\leq h<0$, and $\int_V\left( \frac{g}{h}\right)^2|h|d\mu<+\infty.$\\[-6pt]


\noindent \emph{(C-2)}. $G$ is Cheeger, $g\in L^2(V)$, $h\in L^1(V)$ and $h\leq0$.\\[-10pt]

\begin{theorem}
\label{thm-main}
The Kazdan-Warner equation (\ref{kz-def-t-1}) has a global solution on $G$ under either condition (C-1) or (C-2).
\end{theorem}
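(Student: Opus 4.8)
The plan is to solve the Kazdan-Warner equation $\Delta f = g - he^f$ by a heat flow method, following Wang-Zhang as advertised in the introduction. The idea is to introduce the parabolic equation

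\begin{equation*}
\partial_t u = \Delta u - h e^u + g, \qquad u(\cdot,0)=u_0,
\end{equation*}

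on each finite full subgraph $\overline{G_k}$ of an exhaustion, with Dirichlet-type boundary data $u|_{\partial V_k}$ prescribed (say equal to the initial data, or zero) so that on a finite graph the flow is a genuine ODE system and hence has a short-time solution. Since $h\le 0$, the reaction term $-he^u\ge 0$ cooperates with the flow, and one expects monotonicity: choosing a suitable supersolution and subsolution as barriers, I would show the flow exists for all $t\ge 0$ and stays trapped between them. I expect the proof to be organized in the three steps the introduction promises: first establish a priori bounds and long-time existence of the flow on each $\overline{G_k}$; second show the flow converges as $t\to\infty$ to a solution $f_k$ of the elliptic equation on $G_k$; third extract a locally convergent subsequence of $\{f_k\}$ as $k\to\infty$ and verify the limit is a global solution on all of $G$.

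For the a priori bounds I would seek explicit barriers driven by the two hypotheses. Under (C-1) the natural guess is to compare $e^f$ with $g/h$: since $g\le h<0$ we have $0<g/h\le 1$, and one checks whether the constant-in-time profile $f=\log(g/h)$ is nearly stationary, using $g\le h$ to control the sign of $\Delta f - g + he^f$. The integrability assumption $\int_V (g/h)^2|h|\,d\mu<\infty$ is exactly what is needed to get a uniform $L^2$-type energy bound on the approximants $f_k$, so that the diagonal/subsequence limit makes sense and no mass escapes to infinity. Under (C-2) the mechanism is different: here the Cheeger inequality \eqref{cheeger-inequality} supplies a spectral gap on each $\overline{G_k}$ for functions vanishing on $\partial V_k$, which via the Green formula $\int_V|\nabla f|^2\,d\mu = -\int_V f\,\Delta f\,d\mu$ yields a Poincar\'e-type estimate $\|f_k\|_{L^2}\le C\|\nabla f_k\|_{L^2}\le C'\|g\|_{L^2}$, uniform in $k$; combined with $g\in L^2(V)$, $h\in L^1(V)$, $h\le 0$ this again gives uniform bounds enabling passage to the limit.

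Concretely, each step would run as follows. For long-time existence I would differentiate an energy or apply a maximum-principle comparison to rule out blow-up in finite time, the non-positivity of $h$ being crucial to keep $e^u$ from running away. For convergence of the flow on a fixed $G_k$, I would show the energy $\frac12\|\nabla u\|_{L^2}^2 + \int_V(he^u - gu)\,d\mu$ is monotone along the flow and bounded below, so $\partial_t u\to 0$ and a limit $f_k$ exists solving the elliptic problem on $G_k$; since $G_k$ is finite this is standard gradient-flow convergence. For the final diagonal argument I would use the uniform bounds from (C-1) or (C-2) together with local finiteness: on any fixed finite vertex set the values $f_k(x)$ are bounded, so by a diagonal extraction $f_k\to f$ pointwise (equivalently, locally uniformly), and because $\Delta$ is a local finite-sum operator one can pass to the limit vertex-by-vertex in $\Delta f_k(x) - g(x) + h(x)e^{f_k(x)}=0$ to conclude $f$ solves \eqref{kz-def-t-1} at every $x\in V$.

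The main obstacle I anticipate is the passage to the infinite-graph limit: obtaining bounds that are genuinely uniform in $k$ and that prevent loss of the solution as $k\to\infty$. On a finite graph the flow and its limit are routine, but on an infinite graph one must guarantee the subsequential limit is finite everywhere and still solves the equation, with no escape of mass to infinity. This is precisely where the two integrability/geometry hypotheses do the real work: (C-1) supplies a pointwise barrier and an $L^2$ tail bound, while (C-2) trades a pointwise barrier for a uniform spectral-gap estimate. Reconciling the boundary contributions $\partial V_k$ in the Cheeger case and verifying that the limit function inherits the needed regularity to apply the Green formula are the delicate points I would watch most carefully.
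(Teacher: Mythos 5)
Your overall architecture (heat flow on an exhaustion, long-time existence on each finite piece, convergence of the flow, then a diagonal limit with (C-1)/(C-2) supplying uniform bounds) is the same as the paper's, but your flow has the wrong sign, and this propagates fatally through the sketch. You write $\partial_t u=\Delta u-he^u+g$, whose stationary points solve $\Delta u=he^u-g$, not the Kazdan--Warner equation $\Delta f=g-he^f$; the correct (negative gradient) flow is the paper's $\partial_t f=\Delta_k f+he^f-g-\varphi_k f$, where $\varphi_k f$ encodes the zero extension outside $V_k$ via $\Delta f=\Delta_k f-\varphi_k f$. The sign matters: with $h\le 0$ the correct reaction term $he^f\le 0$ is a damping term for large $f$, which is exactly what makes the maximum-principle bound on $(f_t)^2$ work and gives global existence; your term $-he^u=|h|e^u\ge 0$ is an exponentially growing source, so your claim that it ``cooperates'' is backwards, and your flow can blow up in finite time (take $g,h$ constant and $u$ spatially constant: $\dot u=|h|e^u+g$). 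Consistently, your Lyapunov functional $\tfrac12\|\nabla u\|^2+\int(he^u-gu)\,d\mu$ is attached to that wrong flow and, with $h\le0$, it is \emph{not} bounded below (the term $\int he^u\,d\mu\to-\infty$ as $u\to+\infty$ wherever $h<0$), so your ``standard gradient-flow convergence'' step collapses. The paper's functional has the opposite sign, $-\int(e^f-1)h\,d\mu\ge-\|h\|_{L^1}$, plus the boundary term $\tfrac12\int\varphi_k f^2 d\mu$, and even then the paper does not invoke boundedness below: the crux of its Step 2 is a rescaling/blow-up argument showing the trajectory is uniformly bounded on $[0,\infty)\times V_k$ (if $l_i=\max|f(t_i,\cdot)|\to\infty$, set $u_i=f(t_i,\cdot)/l_i$, deduce $\int|\nabla u_i|^2d\mu\to0$, so $u_i\to\pm1$; the case $+1$ contradicts the energy inequality via $e^x-1>ax$ for large $x$, and the case $-1$ forces $\varphi_k\equiv0$, contradicting connectedness of $G$; when $h\equiv0$ on $V_k$, which only occurs under (C-2), the Cheeger inequality is used instead). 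Nothing playing this role appears in your proposal.

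Two further points where your sketch diverges from what actually closes the argument. First, under (C-1) you assert $g\le h<0$ gives $0<g/h\le1$; dividing by the negative number $h$ reverses the inequality, so in fact $g/h\ge1$, and your proposed barrier $\log(g/h)$ is built on the wrong sign. The paper instead proves $f^k\ge0$ by a minimum principle (at a negative minimum, $\Delta_k f^k<g-h\le0$, contradicting $\Delta_k f^k\ge0$), and then obtains the uniform bound by combining the inherited energy inequality (\ref{est-k}) (which holds because the flow starts at $f_0=0$, so $J_k(f^k)\le J_k(0)=0$ --- a mechanism your sketch never identifies) with $e^x-1>\tfrac12 x^2$ for $x>0$ and Cauchy--Schwarz weighted by $|h|$; this is precisely where $\int_V(g/h)^2|h|\,d\mu<\infty$ enters. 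Second, under (C-2) your chain $\|f_k\|_{L^2}\le C\|\nabla f_k\|_{L^2}\le C'\|g\|_{L^2}$ is not right as stated: the energy inequality only gives $\|\nabla f^k\|_{L^2}^2\le 2\|h\|_{L^1}+2\|f^k\|_{L^2}\|g\|_{L^2}$, so the gradient bound itself involves the unknown $\|f^k\|_{L^2}$, and one must couple it with the Cheeger inequality (\ref{cheeger-inequality}) to get a quadratic inequality of the form $\|f^k\|_{L^2}^2\le C(G,h)+C(G,g)\|f^k\|_{L^2}$, which then closes. Your Step 3 (diagonal extraction and vertex-by-vertex passage to the limit, using that $\Delta_k f^k(x)=\Delta f^k(x)$ and $\varphi_k(x)=0$ for $k$ large) does match the paper and is fine once the uniform bounds are in hand.
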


\begin{corollary}
Then Poisson equation $\Delta f=g$ with $g\in L^2(V)$ has a global solution on a Cheeger graph $G$.
\end{corollary}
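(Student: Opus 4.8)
The plan is to read off this corollary as the degenerate case $h\equiv 0$ of the main theorem rather than to prove anything new. The Poisson equation $\Delta f=g$ is exactly the Kazdan-Warner equation (\ref{kz-def-t-1}) with the coefficient function $h$ taken to be identically zero, since then $\Delta f+he^f-g=\Delta f-g$, and a global solution of (\ref{kz-def-t-1}) in this case is precisely a global solution of $\Delta f=g$. So the whole task reduces to checking that the hypotheses of Theorem \ref{thm-main} are met, and the natural choice is to verify condition (C-2).

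First I would set $h\equiv 0$ and confirm the four clauses of (C-2) one by one. The graph $G$ is assumed Cheeger, which is clause one. The function $g$ is assumed to lie in $L^2(V)$, which is clause two. For the remaining clauses, observe that $h\equiv 0$ trivially satisfies $h\leq 0$ pointwise, and that $\int_V|h|\,d\mu=0<+\infty$, so $h\in L^1(V)$ as well. Hence all the requirements of (C-2) hold, and Theorem \ref{thm-main} applies directly to yield a global solution $f$ of $\Delta f+he^f-g=0$, which is $\Delta f=g$.

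There is no genuine obstacle here, since all of the analytic work — the construction of the heat flow, the a priori estimates, and the passage to the limit along an exhaustion — is already carried out inside the proof of Theorem \ref{thm-main}. The only point worth a moment's care is that the argument underlying Theorem \ref{thm-main} must remain valid when $h$ degenerates to zero: one should make sure that none of the estimates secretly divides by $h$ or assumes $h<0$ somewhere in the (C-2) branch. Assuming the (C-2) branch of that proof is written uniformly in $h\leq 0$ (as its statement advertises), the specialization $h\equiv 0$ is legitimate and the corollary follows.

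Consequently the proof is a one-line deduction: invoke Theorem \ref{thm-main} under condition (C-2) with $h\equiv 0$, and the resulting solution of (\ref{kz-def-t-1}) solves the Poisson equation $\Delta f=g$.
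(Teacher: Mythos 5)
Your proposal is correct and is exactly the paper's intended argument: the corollary is stated as an immediate specialization of Theorem \ref{thm-main} under condition (C-2) with $h\equiv 0$, which turns (\ref{kz-def-t-1}) into $\Delta f=g$. The caveat you raise is also already addressed inside the paper's proof, since Lemma \ref{thm-bound-solution} treats the case $h=0$ on $V_k$ separately (using the Cheeger inequality), so the (C-2) branch genuinely covers $h\equiv 0$.
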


In case $g$ is a constant $c$, The Kazdan-Warner equation (\ref{kz-def-t-1}) changes to
\begin{equation}
\label{equ-kz-c-equat}
\Delta f=c-he^f.
\end{equation}
Obviously, Theorem \ref{thm-main} implies the following three corollaries.

\begin{corollary}
Assume $c\leq h<0$, and $h$, $h^{-1}\in L^1(V)$, then the equation (\ref{equ-kz-c-equat}) has a global solution on $G$.
\end{corollary}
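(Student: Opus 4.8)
The plan is to deduce this corollary directly from Theorem~\ref{thm-main} by taking the prescribed function $g$ in the general equation (\ref{kz-def-t-1}) to be the constant $c$, so that (\ref{kz-def-t-1}) becomes exactly (\ref{equ-kz-c-equat}). It then suffices to verify that the present hypotheses imply condition (C-1), after which the existence of a global solution is immediate. First I would record that two of the three requirements of (C-1) are assumed outright: the integrability $h\in L^1(V)$ is given, and the ordering $g\leq h<0$ reads $c\leq h<0$, which is precisely our assumption (note that this already forces $c<0$).

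The only point requiring a short computation is the last clause of (C-1), namely $\int_V\left(\frac{g}{h}\right)^2|h|\,d\mu<+\infty$. Since $g\equiv c$ is constant and $h<0$, I would simplify the integrand pointwise as
\begin{equation*}
\left(\frac{g}{h}\right)^2|h|=c^2\,h^{-2}\,|h|=c^2\,|h|^{-1}=c^2\,|h^{-1}|,
\end{equation*}
so that
\begin{equation*}
\int_V\left(\frac{g}{h}\right)^2|h|\,d\mu=c^2\int_V|h^{-1}|\,d\mu=c^2\,\|h^{-1}\|_{L^1(V)}.
\end{equation*}
The hypothesis $h^{-1}\in L^1(V)$ then gives $\|h^{-1}\|_{L^1(V)}<+\infty$, whence the right-hand side is finite. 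This establishes the third clause of (C-1).

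Having verified all three parts of (C-1) with $g\equiv c$, I would invoke Theorem~\ref{thm-main} to conclude that (\ref{kz-def-t-1}), i.e. (\ref{equ-kz-c-equat}), admits a global solution on $G$. There is essentially no analytic obstacle here: the entire content of the corollary is the bookkeeping observation that, for a constant $g=c$, the weighted integrability $\int_V(g/h)^2|h|\,d\mu$ collapses to $c^2\|h^{-1}\|_{L^1(V)}$, so that the abstract condition (C-1) is exactly equivalent to the clean requirement $h,h^{-1}\in L^1(V)$ together with $c\leq h<0$. The ``hard part'' is therefore not in this proof at all but is already absorbed into Theorem~\ref{thm-main}.
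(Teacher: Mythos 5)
Your proposal is correct and is exactly the argument the paper intends: the paper states this corollary as an immediate consequence of Theorem~\ref{thm-main}, and your verification that $g\equiv c$ satisfies (C-1) --- in particular the pointwise identity $\left(\frac{c}{h}\right)^2|h|=c^2|h^{-1}|$, which reduces the integrability clause to $h^{-1}\in L^1(V)$ --- is precisely the omitted bookkeeping. Nothing is missing; your observation that $c\leq h<0$ forces $c<0$ (so $c^2>0$ and the reduction is an equivalence, not just an implication) is a nice touch.
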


\begin{corollary}
\label{coro-finit-volume}
Assume $G$ has finite volume, i.e. $\int_Vd\mu<+\infty$, then (\ref{equ-kz-c-equat}) has a global solution if $c\leq h\leq-\epsilon$, where $\epsilon$ is a positive constant.
\end{corollary}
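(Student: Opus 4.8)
The plan is to reduce Corollary \ref{coro-finit-volume} to Theorem \ref{thm-main} by checking that its hypotheses force condition (C-1). Setting $g\equiv c$ in equation (\ref{kz-def-t-1}) makes equation (\ref{equ-kz-c-equat}) an instance of the Kazdan-Warner equation (\ref{kz-def-t-1}) with constant right-hand coefficient, so it suffices to verify the three requirements of (C-1): that $h\in L^1(V)$, that $g\leq h<0$, and that $\int_V\left(\frac{g}{h}\right)^2|h|\,d\mu<+\infty$.

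First I would record the pointwise consequences of the assumption $c\leq h\leq-\epsilon$. Since $-\epsilon<0$, this chain gives $c\leq-\epsilon<0$, so $c$ is a strictly negative constant and $g=c\leq h<0$, which is already the second requirement of (C-1). Moreover $c\leq h<0$ is equivalent to $0<-h\leq-c$, that is $|h|\leq|c|$ on all of $V$, while $h\leq-\epsilon$ provides the uniform lower bound $|h|\geq\epsilon$.

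Next I would combine these bounds with the finite-volume assumption, writing $\mathrm{Vol}(G)=\int_V d\mu<+\infty$. For the first requirement, $|h|\leq|c|$ gives $\int_V|h|\,d\mu\leq|c|\,\mathrm{Vol}(G)<+\infty$, so $h\in L^1(V)$. For the integrability condition, the key observation is that $\left(\frac{g}{h}\right)^2=\left(\frac{c}{h}\right)^2$ is uniformly bounded: from $|h|\geq\epsilon$ we get $\left|\frac{c}{h}\right|=\frac{|c|}{|h|}\leq\frac{|c|}{\epsilon}$, hence $\left(\frac{g}{h}\right)^2\leq\frac{c^2}{\epsilon^2}$. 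Together with $|h|\leq|c|$ this yields $\int_V\left(\frac{g}{h}\right)^2|h|\,d\mu\leq\frac{c^2}{\epsilon^2}\,|c|\,\mathrm{Vol}(G)<+\infty$. With all three parts of (C-1) confirmed, Theorem \ref{thm-main} produces a global solution of (\ref{equ-kz-c-equat}).

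The argument is a direct verification rather than a new computation, so I do not expect a genuine obstacle; the only step where every hypothesis is actually used is the integrability condition. The uniform lower bound $\epsilon$ on $|h|$ is precisely what keeps the quotient $g/h$ from blowing up at vertices where $h$ approaches $0$, and the finite volume is what turns the resulting uniform bound into a finite integral. Were either dropped, $\left(\frac{g}{h}\right)^2|h|$ need not be summable and (C-1) could fail, so these are the two assumptions doing the real work.
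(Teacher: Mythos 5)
Your verification is correct and is exactly the route the paper intends: the paper derives Corollary \ref{coro-finit-volume} from Theorem \ref{thm-main} by checking condition (C-1), which is what you do (indeed, one can even bound the integrand directly as $\left(\frac{g}{h}\right)^2|h|=\frac{c^2}{|h|}\leq\frac{c^2}{\epsilon}$, slightly streamlining your last estimate). The paper leaves this verification to the reader ("Obviously, Theorem \ref{thm-main} implies..."), so your write-up simply supplies the details of the same argument.
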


\noindent \emph{Remark 1.}
It appears that all existing results about the solvability of some nonlinear equations on $G$ need $``\mu$-noncollapsing", i.e. there is a positive constant $\delta$ so that $\mu(x)\geq\delta$ for each $x\in V$. Corollary \ref{coro-finit-volume} seems the first one dealing with $``\mu$-collapsing" case.

\begin{corollary}
\label{coro-c=0}
Assume $G$ is a Cheeger graph, then the following equation
\begin{equation}
\label{kw-equ-c=0}
\Delta f=-he^f
\end{equation}
has a global solution on $G$ if $h\in L^1(V)$ and $h\leq0$.
\end{corollary}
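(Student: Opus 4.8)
The plan is to recognize Corollary~\ref{coro-c=0} as the special case $g\equiv 0$ of Theorem~\ref{thm-main}, invoked through condition (C-2). Indeed, substituting $g\equiv 0$ into the Kazdan-Warner equation~(\ref{kz-def-t-1}) yields exactly $\Delta f=-he^f$, which is~(\ref{kw-equ-c=0}). So it suffices to verify that the four hypotheses comprising (C-2) are all met, and then quote the theorem.

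First I would dispose of the three conditions supplied directly by hypothesis: $G$ is Cheeger, $h\in L^1(V)$, and $h\leq 0$ are each assumed in the statement of the corollary. The only remaining requirement of (C-2) is $g\in L^2(V)$. Here $g\equiv 0$, so $\|g\|_{L^2(V)}^2=\int_V 0\,d\mu=0<+\infty$, whence $g\in L^2(V)$ trivially, independently of the volume of $V$ or any further feature of $G$. Thus all four conditions of (C-2) hold.

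With (C-2) verified, Theorem~\ref{thm-main} applies and produces a global solution $f$ of~(\ref{kz-def-t-1}) with $g\equiv 0$, which is precisely a global solution of~(\ref{kw-equ-c=0}); this completes the argument. Since the deduction is a pure specialization, there is no genuine obstacle to overcome: the entire analytic content—the heat-flow construction together with the use of the Cheeger inequality~(\ref{cheeger-inequality}) to control the flow and extract a convergent limit—has already been carried out in the proof of Theorem~\ref{thm-main}. The one point worth flagging is that (C-2), unlike (C-1), imposes no lower bound on $h$ (no $h^{-1}$-type integrability), so the degenerate set $\{h=0\}$ presents no difficulty here, and the choice $g\equiv 0\leq h$ is admissible without any extra hypothesis.
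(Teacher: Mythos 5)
Your proposal is correct and matches the paper exactly: the paper treats this corollary as an immediate consequence of Theorem \ref{thm-main} under condition (C-2) with $g\equiv 0$, which is precisely your specialization argument. Your explicit verification that $g\equiv 0\in L^2(V)$ and the observation that (C-2) requires no lower bound on $h$ are sound elaborations of what the paper leaves as ``obvious.''
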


\noindent \emph{Remark 2.}
Geometrically, $c=0$ means the underlying manifold is everywhere flat. Hence in the smooth case, the equation (\ref{kw-equ-c=0}) is in fact the smooth Kadan-Warner equation on $\mathbb{R}^2$. In this sense, Corollary \ref{coro-c=0} deals with the solvability of the graph Kazdan-Warner equation on a Cheeger graph drawing on $\mathbb{R}^2$. Note on $\mathbb{R}^2$, Sattinger \cite{Sat} proved that (\ref{kw-equ-c=0}) has no solution if $h(x)\leq-\frac{C}{\|x\|^2}$ at infinity, while Ni \cite{ni-3} proved that (\ref{kw-equ-c=0}) possesses infinitely many solutions if $0\geq h(x)\geq-\frac{C}{\|x\|^l}$ at infinity for some $l>2$. Hence our assumption that $h\in L^1(V)$ seems sharp for the existence of a solution to (\ref{kw-equ-c=0}) on a planar graph.

\section{Proof of Theorem \ref{thm-main}}
In this section, we prove the existence of a global solution for the Kazdan-Warner equation under either condition (C-1) or condition (C-2). The proof is divided into three steps.

In Step 1, we transform the Kazdan-Warner equation (\ref{kz-def-t-1}) to an associated equation on $G_k$ for each $k$, where $\{G_k\}$ is an exhaustion of $G$.

In Step 2, we show that for each $k$, the associated equation has a solution $f^k$ on $G_k$.

In Step 3, we show $\{f^k(x)\}_{k\geq1}$ is bounded for each fixed $x\in V$ under (C-1) or (C-2). By choosing a subsequence of $f^k$, we get a solution to the Kazdan-Warner equation (\ref{kz-def-t-1}).\\

\textbf{Step 1.} Let $\{G_k\}_{k\geq1}$ be an exhaustion of $G$. When considering the condition (C-2) case, we require that $\{G_k\}_{k\geq1}$ satisfies Cheeger' inequality (\ref{cheeger-inequality}). For fixed $k$, consider the following equation
\begin{eqnarray}
\label{equ-lap-k}
  \Delta f(x)-g(x)+h(x)e^{f(x)}=0 \;\;\;\text{ for } x\in G_k, \notag\\
   f(x)=0    \;\;\;\text{ for } x\notin G_k.
\end{eqnarray}

Denote $\Delta_k$ as the discrete Laplacian on $G_k=(V_k,E_k)$, that is for each $x\in V_k$
$$\Delta_k f(x)=\frac{1}{\mu(x)} \sum_{y\thicksim x,\,y\in G_k}w_{xy}\big(f(y)-f(x)\big).$$
For each $x\in V_k$ we have
$$\Delta f(x)=\Delta_k f(x)-\varphi_k(x)f(x),$$
where
$$\varphi_k(x)=\frac{1}{\mu(x)} \sum_{y\thicksim x,\,y\notin G_k}w_{xy}.$$
Clearly $\varphi_k\geq 0$.
We rewrite the equation (\ref{equ-lap-k}) on $G_k$ as
\begin{equation}\label{kz-1}
\Delta_k f-\varphi_k f+he^f-g=0,\;\; \text{on } G_k,
\end{equation}
and call it the associated Kazdan-Warner equation on $G_k$.\\

Let $f(t,x):\mathbb{R}\times V_k\to\mathbb{R}$ be a function defined on $V_k$ at each time $t$, and varying with respect to $t$ smoothly. Consider the following heat equation
\begin{equation}
\label{heat}
\left(\frac{\partial}{\partial t}-\Delta_k\right)f=he^f-g-\varphi_kf.
\end{equation}


\begin{lemma}
\label{lem-heat-flow}
Given a function $f_0:V_k \to \mathbb{R}$. Assume $h\leq0$, then for each $k$, the solution to the heat equation (\ref{heat}) on $G_k$ with an initial condition $f(0,\cdot)=f_0$ exists on $[0,\infty)\times V_k$.
\end{lemma}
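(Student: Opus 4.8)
The plan is to regard the heat equation (\ref{heat}) as a system of ordinary differential equations on the finite-dimensional space $\mathbb{R}^{V_k}$ and to obtain global-in-time existence by ruling out finite-time blow-up. Writing the right-hand side as $F(f)(x)=\Delta_k f(x)+h(x)e^{f(x)}-g(x)-\varphi_k(x)f(x)$, I first note that $F:\mathbb{R}^{V_k}\to\mathbb{R}^{V_k}$ is smooth, since $\Delta_k f$ and $\varphi_k f$ are linear in the coordinates of $f$, the term $g$ is fixed, and $x\mapsto h(x)e^{f(x)}$ depends smoothly on those coordinates. Hence the Picard--Lindel\"of theorem yields a unique solution on a maximal interval $[0,T_{\max})$, and it suffices to show that $\|f(t,\cdot)\|_{L^\infty(V_k)}$ stays bounded on every finite interval, for then $T_{\max}=\infty$ by the standard continuation criterion for ordinary differential equations.

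For the a priori bound I would apply the discrete maximum principle to $M(t)=\max_{x\in V_k}f(t,x)$ and $m(t)=\min_{x\in V_k}f(t,x)$, both of which are locally Lipschitz, so their Dini derivatives may be evaluated at a vertex realizing the extremum. At a vertex $x^*$ where $f(t,x^*)=M(t)$ one has $\Delta_k f(t,x^*)\leq0$; combined with $h\leq0$, so that $h(x^*)e^{f(t,x^*)}\leq0$, and with $-\varphi_k(x^*)M(t)\leq0$ whenever $M(t)\geq0$ since $\varphi_k\geq0$, this gives $M'(t)\leq -g(x^*)\leq\max_{x\in V_k}|g(x)|$. As $V_k$ is finite the latter is a finite constant, so $M(t)$ grows at most linearly and cannot reach $+\infty$ in finite time. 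The essential mechanism here is that the sign condition $h\leq0$ turns the only exponential, and hence potentially explosive, term into a damping term.

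The lower bound is where the nonlinearity must be handled with care, and I expect this to be the main point. At a vertex $x_*$ realizing $m(t)$ one has $\Delta_k f(t,x_*)\geq0$ and, when $m(t)\leq0$, $-\varphi_k(x_*)m(t)\geq0$; the delicate term is $h(x_*)e^{f(t,x_*)}$, which is nonpositive and could a priori drive $m$ downward. The key observation is that as $m(t)\to-\infty$ one has $e^{m(t)}\to0$, so $|h(x_*)e^{f(t,x_*)}|\leq\max_{x\in V_k}|h(x)|$ whenever $m(t)\leq0$; hence $m'(t)\geq -\max_{x\in V_k}|h(x)|-\max_{x\in V_k}|g(x)|$, a finite lower bound, and $m(t)$ can decrease at most linearly. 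Combining the two estimates bounds $\|f(t,\cdot)\|_{L^\infty(V_k)}$ on each $[0,T]$, which excludes finite-time blow-up and establishes existence on $[0,\infty)\times V_k$.
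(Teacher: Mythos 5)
Your proof is correct, but the key a priori estimate is genuinely different from the paper's. Both arguments begin identically: the equation (\ref{heat}) is an autonomous ODE system on $\mathbb{R}^{V_k}$ with smooth right-hand side, Picard--Lindel\"of gives a solution on a maximal interval $[0,T)$, and global existence reduces to ruling out blow-up. From there the paper does \emph{not} estimate $f$ directly. Instead it computes, via a discrete Bochner-type identity,
$\left(\Delta_k-\frac{\partial}{\partial t}\right)(f_t)^2\geq 2(f_t)^2\left(-he^f+\varphi_k\right)\geq 0$
using $h\leq 0$ and $\varphi_k\geq 0$, and then applies the parabolic maximum principle to $(f_t)^2$, concluding that $|f_t|$ is bounded on $[0,T)\times V_k$ by its value at $t=0$; this time-uniform bound on the velocity forces $T=+\infty$. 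You instead run the maximum principle on $f$ itself, through Dini derivatives of $M(t)=\max_{V_k}f$ and $m(t)=\min_{V_k}f$ evaluated at extremal vertices (Hamilton's trick): the sign $h\leq 0$ kills the exponential term from above, while $e^{m(t)}\leq 1$ when $m(t)\leq 0$ kills it from below, yielding linear-in-time $L^\infty$ bounds. Your route is more elementary --- it avoids both the Bochner computation and the parabolic maximum principle, needing only pointwise sign inspection --- and it fully suffices for the lemma. What the paper's version buys is a stronger output: the time-uniform estimate $|f_t|=|\Delta_kf+he^f-g-\varphi_kf|\leq C(G,h,g,f_0)$, i.e.\ (\ref{heat-equ-ft-bound}), is reused later in the case $C^*=-1$ of the proof of Lemma \ref{thm-bound-solution}, where it is divided by $l_i$ to force $\varphi_k\equiv 0$ and derive a contradiction. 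Your $L^\infty$ bounds on $f$ alone do not provide that ingredient, so if your proof replaced the paper's, the bound on $f_t$ would still have to be established separately for Step 2.
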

\begin{proof}
Write $f_t=\partial_tf$ and $f_{tt}=\partial_t\partial_tf$. Then the heat equation (\ref{heat}) can be written as an ODE system $f_t=F(t,f)$ with $F(t,f)=\Delta_kf+he^f-g-\varphi_kf$. Since all the coefficients in $F(t,f)$ (as a function of $f$) are
smooth and hence locally Lipschitz continuous. By the Picard theorem in classical ODE theory, the heat equation (\ref{heat}) on $G_k$ with an initial condition $f(0,\cdot)=f_0$ has a unique solution  on a small time interval $[0,\epsilon)$. By the extension theorem in classical ODE theory, there is a $0<T\leq+\infty$, such that the heat equation (\ref{heat}) has a solution $f$ on a right maximal time interval $[0, T)$. If $T\neq+\infty$, then $f_t(t,\cdot)$ ``blows up" when $t$ goes to $T$, i.e. $f_t$ can't be uniformly bounded for $t\in[0,T)$ and $x\in V_k$. We next show this will not happen.
By direct calculation, we obtain
\begin{eqnarray*}
\left(\Delta_k-\frac{\partial}{\partial t}\right)\big(f_t\big)^2&=&2  f_t \Delta_kf_t+2 |\nabla f|^2-2f_{tt}f_t\notag\\
&\geq&2f_t \frac{\partial}{\partial t}\big(\Delta_kf-f_t\big)\notag\\
&=&2\big(f_t\big)^2\big(-he^f+\varphi_k\big).
\end{eqnarray*}
Note $h\leq 0$ and $\varphi_k\geq0$, then
$$\left(\Delta_k -\frac{\partial}{\partial t}\right)\big(f_t\big)^2\geq 0.$$
By the maximum principle of parabolic equations, we obtain
\begin{eqnarray}
\label{heat-equ-ft-bound}
\max_{0\leq t<T,\,x\in V_k}|f_t(t,x)|^2&=&\max_{x\in V_k}|f_t(0,x)|^2\notag\\
&=&\max_{x\in V_k}|\Delta_kf_0+he^{f_0}-g-\varphi_kf_0|\notag\\
&=&C(G,h,g,f_0).
\end{eqnarray}
Hence $f_t(t,x)$ is uniformly bounded for $t\in[0,T)$ and $x\in V_k$. This means $T=+\infty$, which implies the conclusion.
\end{proof}

\textbf{Step 2.} Assume $h\in L^1(V)$ and $h\leq0$. Then we have
\begin{lemma}
\label{thm-bound-solution}
The associated Kazdan-Warner equation (\ref{kz-1}) has a solution $f^k$ on $V_k$ (if $h$ is not identically zero, we require $k$ is large enough so that $V_k$ contains a vertex $x_0$ with $h(x_0)\neq0$). Moreover, $f^k$ satisfies
\begin{equation}\label{est-k}
\int_{V_k}\left(f^kg+\frac{1}{2}\left(|\nabla f^k|^2+\varphi_k f^2\right)-\left(e^{f^k}-1\right)h\right)d\mu\leq 0.
\end{equation}
\end{lemma}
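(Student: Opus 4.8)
The plan is to obtain $f^k$ as the long-time limit of the heat flow (\ref{heat}), exploiting that (\ref{kz-1}) is the Euler--Lagrange equation of the energy functional
\[
J_k(f)=\int_{V_k}\Big(\tfrac12|\nabla f|^2+\tfrac12\varphi_k f^2+fg-(e^f-1)h\Big)\,d\mu,
\]
whose $L^2(V_k,\mu)$ gradient flow is precisely (\ref{heat}); the left-hand side of (\ref{est-k}) is exactly $J_k(f^k)$. I would run the flow from the initial datum $f_0\equiv 0$, for which Lemma \ref{lem-heat-flow} (using $h\le 0$) guarantees a global solution $f(t,\cdot)$ on $[0,\infty)\times V_k$, and note $J_k(0)=0$.

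A direct computation along the flow gives $\frac{d}{dt}J_k(f(t,\cdot))=-\int_{V_k}f_t^2\,d\mu\le 0$, so $J_k$ is non-increasing and $J_k(f(t,\cdot))\le J_k(0)=0$ for all $t$. Integrating this dissipation identity yields $\int_0^\infty\!\!\int_{V_k}f_t^2\,d\mu\,dt=J_k(0)-\lim_{t\to\infty}J_k(f(t,\cdot))<\infty$, once $J_k$ is known to be bounded below along the orbit. Consequently there is a sequence $t_n\to\infty$ with $\|f_t(t_n,\cdot)\|_{L^2(V_k)}\to 0$.

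The heart of the argument is coercivity of $J_k$, which simultaneously bounds $J_k$ from below and confines the orbit to a bounded subset of $\mathbb{R}^{V_k}$. The quadratic part $\frac12\int_{V_k}(|\nabla f|^2+\varphi_k f^2)\,d\mu$ is a positive-definite form on functions supported in $V_k$: since $G$ is connected, infinite and locally finite while $V_k$ is finite, every connected component of $G_k$ contains a vertex adjacent to $V\setminus V_k$, so $\varphi_k$ is positive somewhere on each component and the form vanishes only at $f\equiv 0$. Using $h\le 0$, the nonlinear term $-(e^f-1)h=(e^f-1)(-h)$ is bounded below (by $\int_{V_k}h\,d\mu$), while the linear term $\int_{V_k}fg\,d\mu$ is absorbed by Cauchy--Schwarz against the positive-definite quadratic form. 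Hence $J_k(f)\to+\infty$ as $\|f\|\to\infty$, the sublevel set $\{J_k\le 0\}$ is bounded, and the orbit $\{f(t,\cdot)\}$ stays bounded.

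Boundedness then lets me pass to a further subsequence along which $f(t_n,\cdot)\to f^k$ in the finite-dimensional space $\mathbb{R}^{V_k}$. Since every term of (\ref{heat}) depends continuously on $f$ and $f_t(t_n,\cdot)\to 0$, the limit $f^k$ is a steady state, i.e. $\Delta_k f^k-\varphi_k f^k+he^{f^k}-g=0$, which is exactly (\ref{kz-1}); the technical hypothesis that $V_k$ contain a vertex $x_0$ with $h(x_0)\neq 0$ is only needed to rule out the degenerate linear case. Finally, continuity of $J_k$ together with monotonicity gives $J_k(f^k)=\lim_n J_k(f(t_n,\cdot))\le 0$, which is the asserted estimate (\ref{est-k}). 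I expect the main obstacle to be this coercivity/compactness step: controlling the linear term $\int fg$ and the exponential term against the quadratic energy to pin down the a priori bound on the orbit, where the sign condition $h\le 0$ (giving a lower bound on the exponential term) and the killing potential $\varphi_k$ (giving positive-definiteness of the Dirichlet form on the finite graph $G_k$) are both essential.
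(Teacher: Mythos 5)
Your proposal is correct, and it follows the paper's overall skeleton: run the heat flow (\ref{heat}) from $f_0\equiv 0$, use $\frac{d}{dt}J_k\big(f(t,\cdot)\big)=-\int_{V_k}f_t^2\,d\mu\le 0$ to keep $J_k\le 0$ along the orbit and extract times $t_n\to\infty$ with $f_t(t_n,\cdot)\to 0$, then pass to a convergent subsequence in the finite-dimensional space $\mathbb{R}^{V_k}$ and identify the limit as a steady state. Where you genuinely diverge is in the crucial a priori boundedness of the orbit. The paper argues by cases: if $h\equiv 0$ on $V_k$ (possible only under (C-2)) it invokes the Cheeger inequality (\ref{cheeger-inequality}) to bound $\|f(t,\cdot)\|_{L^2(V_k)}$; if $h\not\equiv 0$ on $V_k$ it runs a blow-up contradiction argument, normalizing $u_i=f(t_i,\cdot)/l_i$, showing $u_i$ tends to a constant $\pm1$, and ruling out $+1$ via exponential growth beating the linear term and $-1$ via the $f_t$-bound (\ref{heat-equ-ft-bound}) forcing $\varphi_k\equiv 0$, contradicting connectedness. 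You instead prove coercivity of $J_k$ outright: since $G$ is connected and infinite while $V_k$ is finite, every component of $G_k$ meets $\{\varphi_k>0\}$, so the quadratic form $\tfrac12\int_{V_k}\left(|\nabla f|^2+\varphi_k f^2\right)d\mu$ is positive definite on $\mathbb{R}^{V_k}$ and hence bounded below by $c_k\|f\|_{L^2(V_k)}^2$; combined with the pointwise bound $-(e^f-1)h\ge h$ (valid since $h\le 0$) and Cauchy--Schwarz on $\int_{V_k} fg\,d\mu$ (automatically finite on the finite set $V_k$), this makes the sublevel set $\{J_k\le 0\}$ bounded. Your route buys real simplifications: no case split, no use of the Cheeger hypothesis at this stage, and no need for the parenthetical hypothesis that $V_k$ contain a vertex with $h(x_0)\neq 0$ (which the paper's blow-up case does use); the only cost is that your constant $c_k$ (the smallest eigenvalue of the form on $G_k$) depends on $k$, which is harmless here since the $k$-uniform estimates are deferred to Step 3, where conditions (C-1)/(C-2) and the Cheeger inequality actually enter. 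One cosmetic quibble: your closing remark that the $h(x_0)\neq 0$ hypothesis serves "to rule out the degenerate linear case" describes the paper's proof structure rather than yours, since your argument handles the linear case $h\equiv 0$ on $V_k$ with no extra assumption.
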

\begin{proof}
Consider the heat equation (\ref{heat}) on $G_k$ with an initial condition $f(0,\cdot)=0$. It has a unique solution $f$ by Lemma \ref{lem-heat-flow}. We show that $f$ is uniformly bounded on $[0,+\infty)\times V_k$. Consider the functional
$$J_k(f)=\int_{V_k} \left(fg+\frac{1}{2} |\nabla f|^2-(e^f-1)h+\frac{1}{2} \varphi_k f^2\right)d\mu.$$
Then by direct calculation, we get
\begin{eqnarray}
\label{Jk-deriv}
\frac{d}{dt}\big(J_k(f)\big)&=&\int_{V_k} \left(f_tg-\Delta_kff_t-he^{f}f_t+f_tf\varphi_k\right)d\mu\notag\\
&=&-\int_{V_k} (f _t)^2d\mu\leq 0.
\end{eqnarray}
Hence $J_k\big(f(t,\cdot)\big)$ is descending and
\begin{equation}
\label{est-pre}
\int_{V_k}\left(f g+\frac{1}{2}\left(|\nabla f|^2+\varphi_k f^2\right)-(e^{f}-1)h\right)d\mu\leq J_k\big(f(0,\cdot)\big)=0.
\end{equation}

We argue in the following of this step according to $h=0$ or $h\neq0$ on $V_k$.

(1) If $h=0$ on $V_k$. Extend $f(t,x)$ to $\overline{V_k}$ by setting $f(\cdot,x)=0$ at all vertex $x\in\partial V_k$, so
\begin{eqnarray}
\label{general-B-before}
\int_{\overline{V_k}} |\nabla f|^2d\mu&=&\sum_{x\thicksim y}w_{xy} \left(f(t,x)-f(t,y)\right)^2\notag\\
&=&\sum_{x,y\in V_k,x\thicksim y}w_{xy} \big(f(t,x)-f(t,y)\big)^2
+2\sum_{x\in V_k, y\in \partial {V_k}, x\thicksim y}w_{xy} f^2(t,x)\notag\\
&=&\int_{ V_k}\left(|\nabla f|^2+2\varphi_kf^2\right)d\mu\leq-2\int_{V_k}fgd\mu.
\end{eqnarray}
Note at the beginning of Step 1, we have made the assumption that the exhaustion $\{G_k\}$ satisfies Cheeger' inequality (\ref{cheeger-inequality}) when considering the condition (C-2) case, hence
\begin{eqnarray*}
\int_{V_k} |f|^2d\mu=\int_{\overline{V_k}}|f|^2d\mu&\leq& C(G)\int_{\overline{V_k}} |\nabla f|^2d\mu\\
&\leq&-2C(G)\int_{V_k}fgd\mu\\
&\leq&2C(G)\|g\|_{L^2(V_k)}\left(\int_{V_k}|f|^2d\mu\right)^{1/2}.
\end{eqnarray*}
It follows that there is a constant $C(G,g,V_k)$, such that for each $t\in\mathbb{R}$, 
$$\int_{V_k} |f(t,x)|^2d\mu\leq C(G,g,V_k).$$
This implies that $f(t,x)$ is uniformly bounded on $[0,+\infty)\times V_k$.

(2) If $h\neq0$ on $V_k$. We approach by contradiction. If $f$ is not bounded on $[0,+\infty)\times V_k$, then there is a time sequence $t_i\to+\infty$ such that
$$l_i=\max\limits_{G_k}|f (t_i, \cdot)|\to+\infty.$$
Set $u_i=\cfrac{f(t_i, \cdot)}{l_i}$, then by (\ref{est-pre}), we obtain
\begin{eqnarray*}
\int_{V_k}\left(fg+\frac{1}{2}|\nabla f|^2\right)d\mu&\leq&\int_{G_k}(e^f-1)hd\mu\notag\\
&\leq& \int_{G_k}(-h)d\mu\leq \int_{G} (-h)d\mu=\|h\|_{L^1(V)}.
\end{eqnarray*}
In particular,
$$\int_{V_k}\left(gu_il_i+\frac{1}{2}|\nabla u_i|^2l_i^2\right)d\mu\leq\|h\|_{L^1(V)}.$$
Since $l_i\to +\infty$, then by dividing $l_i^2$ on both sides at the same time, we get
$$\int_{V_k}|\nabla u_i|^2d\mu\to 0.$$
This implies that $u_i$ converges to some constant $C^*$ (=$1$ or $-1$). When $k$ is large enough, we can find $h(x_0)<0$ at some $x_0$ in $V_k$ (otherwise $h$ will be identically zero on $V_k$). In case
$C^*=1$, we have $e^{u_il_i}>1$ when $i$ is large enough, hence
\begin{eqnarray*}
\int_{V_k}(-h)\big(e^{f}-1\big)d\mu&=&\int_{V_k}(-h)\big(e^{u_il_i}-1\big)d\mu\\
&\geq&  -h(x_0)\big(e^{u_il_i}-1\big)\\
&>&\int_{V_k} -gu_il_id\mu=\int_{V_k} -gfd\mu,
\end{eqnarray*}
where the last inequality uses the fact that $e^x-1>ax$ if $x$ is big enough. This leads to a contradiction to (\ref{est-pre}). In case $C^*=-1$, note we have proved
$$|\Delta_k f+he^f-g-\varphi_k f|\leq C(G,h,g)$$
in the proof of Lemma \ref{lem-heat-flow}, see (\ref{heat-equ-ft-bound}). Substitute $f(t_i,\cdot)=u_il_i$ into the above inequality, and divide $l_i$ at both sides at the same time, we get
$$\Delta_k u_i+\frac{he^{u_il_i}}{l_i}-\frac{g}{l_i}-\varphi_k u_i\to 0$$
at each vertex of $V_k$. However, by direct calculation
$$\Delta_k u_i+\frac{he^{u_il_i}}{l_i}-\frac{g}{l_i}-\varphi_k u_i\to-\varphi_k.$$
So $\varphi_k =0$, this contradicts the fact that $G$ is connected. Thus we finally prove that $f$ is bounded on $[0,+\infty)\times V_k$. By (\ref{Jk-deriv}), the expression of $J'_k(t)$, we derive that there is a sequence of times $\{t_n\}$ going to $+\infty$, such that $f_t(t_n,x)\to 0$ at each vertex $x\in V_k$. Further note $f(t_n,\cdot)$ has a converging subsequence, which converges to some function $f^k(x),x\in V_k$. Obviously, $f^k$ is a solution to the associated Kazdan-Warner equation (\ref{kz-1}). Moreover, (\ref{est-k}) comes from (\ref{est-pre}).
\end{proof}

\textbf{Step 3.} If we can prove that $\{f^k(x)\}_{k\geq1}$ is bounded at each vertex $x\in V$, we get a global solution of the Kazdan-Warner equation (\ref{kz-def-t-1}) by choosing a subsequence of $f^k$. The reason is as follows. Note $f^k(x)=0$ if $x$ is not in $V_k$, then by choosing a subsequence we can get a function $f:V\to \mathbb{R}$ such that $f^k(x)\to f(x)$ at each fixed $x\in V$. For every fixed vertex $x\in V$ we have
$$\Delta_k f^k(x)+h(x)e^{f^k(x)}-g(x)-\varphi_k(x) f^k(x)=0.$$
If $k$ is large enough, then $\Delta_k f^k(x)=\Delta f^k(x)$ and $\varphi_k(x)=0$. Let $k\to+\infty$, we get
$$\Delta f(x)+h(x)e^{f(x)}-g(x)=0,$$
which implies that $f$ is a global solution of the Kazdan-Warner equation (\ref{kz-def-t-1}). So our main task in this step is to prove that $\{f^k(x)\}_{k\geq1}$ is bounded when either condition (C-1) or condition (C-2) is satisfied.

Assume the condition (C-1) is satisfied. We first prove $f^k\geq0$ on $V_k$. If this is not true, we may choose a vertex $x\in V_k$ such that $f^k(x)<0$ is the minimum value of $f^k$ on $V_k$. At the vertex $x$, we have
\begin{eqnarray*}
\Delta_k f^k(x)&=&g(x)-h(x)e^{f(x)}+\varphi_k(x)f^k(x)\\
&<&g(x)-h(x)\leq0.
\end{eqnarray*}
However, $\Delta_k f^k(x)\geq0$ since $f^k(x)$ is a minimum value. This leads to a contradiction. Hence $f^k\geq0$ on $V_k$.

Denote $\psi=\cfrac{g}{h}$. By (\ref{est-k}) and the fact $e^x-1>\frac{1}{2}x^2$ when $x>0$, we have
\begin{eqnarray*}
\int_{V_k}\frac{1}{2}\big(f^k\big)^2|h|d\mu&<&\int_{V_k}\big(e^{f^k}-1\big)|h|d\mu\\
&\leq&\int_{V_k}f^k|g|d\mu\\
&\leq&\left(\int_{V_k}\big(f^k\big)^2|h|d\mu\right)^{1/2}\left(\int_{V_k}\psi^2|h|d\mu\right)^{1/2}.
\end{eqnarray*}
Then it follows for each $k$
$$\int_{V_k}\big(f^k\big)^2d\mu\leq4\int_{V_k}|h|\psi^2d\mu\leq4\int_{V}|h|\psi^2d\mu.$$
This implies that $\{f^k(x)\}_{k\geq1}$ is bounded at each fixed vertex $x\in V$.\\

Assume the condition (C-2) is satisfied. Extend $f^k$ to $\overline{V_k}$ by setting $f^k=0$ on $\partial  V_k$. Similar
to (\ref{general-B-before}), we have
\begin{equation}
\label{general-B}
\int_{\overline{V_k}} |\nabla f^k|^2d\mu=\int_{ V_k}\left(|\nabla f^k|^2+2\varphi_k\big(f^k\big)^2\right)d\mu.
\end{equation}
Note $h\leq0$, hence $e^{f^k}h\leq0=h+|h|$ and then $\big(e^{f^k}-1\big)h\leq|h|$. Using this elementary inequality and (\ref{est-k}), we get
\begin{eqnarray}
\label{equ-no-number}
\int_{V_k}\left(f^kg+\frac{1}{2}|\nabla f^k|^2+\varphi_k \big(f^k\big)^2\right)d\mu&\leq&\int_{V_k}\big(e^{f^k}-1\big)hd\mu\notag\\
&\leq&\int_{V_k}|h|d\mu\leq \|h\|_{L^1(V)}.
\end{eqnarray}
Combining the above inequality (\ref{equ-no-number}) with the equality (\ref{general-B}), we get
\begin{equation*}
\int_{\overline{V_k}}|\nabla f^k|^2d\mu\leq2\|h\|_{L^1(V)}+2\int_{V_k}|f^kg|d\mu.
\end{equation*}
Since $G$ is a Cheeger graph, by Cheeger inequality (\ref{cheeger-inequality}) and $g\in L^2(V)$, we obtain
\begin{eqnarray*}
\int_{\overline{V_k}} |f^k|^2d\mu&\leq&C(G)\left(\|h\|_{L^1(V)}+\int_{V_k}|f^kg|d\mu\right)\\
&\leq&C(G)\|h\|_{L^1(V)}+C(G)\left(\int_{V_k}|f^k|^2d\mu\right)^{1/2}\left(\int_{V_k}g^2d\mu\right)^{1/2}\\
&\leq&C(G,h)+C(G)\|g\|_{L^2(V)}\left(\int_{V_k}|f^k|^2d\mu\right)^{1/2}.
\end{eqnarray*}
Therefore,
$$\int_{V_k} |f^k|^2d\mu=\int_{\overline{V_k}} |f^k|^2d\mu\leq C(G,h)+C(G,g)\left(\int_{V_k}|f^k|^2d\mu\right)^{1/2},$$
It follows that there is a constant $C(G,g,h)$, such that for each $k$,
$$\int_{V_k} |f^k|^2d\mu\leq C(G,g,h).$$
This implies that $\{f^k(x)\}_{k\geq1}$ is bounded at each fixed vertex $x\in V$. We finish the proof.\\

\noindent \textbf{Acknowledgements:} The authors would like to thank Professor Yanxun Chang, Gang Tian and Huijun Fan for constant encouragement. The research is supported by National Natural Science Foundation of China under Grant No.11501027.

Huabin Ge: hbge@bjtu.edu.cn

Department of Mathematics, Beijing Jiaotong University, Beijing, 100044, P.R. China\\[2pt]

Wenfeng Jiang: wen\_feng1912@outlook.com

School of Mathematical Sciences, Peking University, Beijing, 100871, P.R. China

\end{document}